\newtheoremstyle{normal}{3mm}{2mm}{\normalfont}{4mm}{\scshape}{.--- }{ }{}
\theoremstyle{normal}
\newtheorem{keyobservation}{Observation}
\begin{document}

\title{An Exponential Sum and Higher-Codimensional Subvarieties\\ of Projective Spaces over Finite Fields.}
\author{Kazuaki Miyatani and Makoto Sano}
\date{\empty}


\makeatletter
\def\thefootnote{($\ast$\@alph\c@footnote)}
\makeatother

\maketitle

\begin{abstract}
    A general method to express in terms of Gauss sums the number of rational points of 
    general subschemes of projective schemes over finite fields
    is applied to the image of the triple embedding $\bP^1\hookrightarrow\bP^3$.
    As a consequence, we obtain a non-trivial description of the value of a Kloosterman-sum-like exponential sum.
\end{abstract}

\section*{Introduction.}

Based on a calculation concerning the diagonal hypersurfaces and Jacobi sums,
Andr\'e Weil \cite{Weil:NSEFF} observed that the number of rational points 
of algebraic varieties over finite fields is a highly geometric information.
His celebrated Weil conjecture is eventually proved by Pierre Deligne \cite{Deligne:WeilI}
with Grothendieck's theory of \'etale cohomology.

Another, and the first, proof by Bernard Dwork \cite{Dwork:RZFAV} of the rationality of zeta function,
a part of Weil conjecture, more directly concerns exponential sums.
In the very beginning of his proof, he reduces the rationality of the zeta function of 
algebraic varieties to that of hypersurfaces of $(\bG_{\rmm})^n$,
and writes the number of rational points of the hypersurface by using exponential sums.

Besides general theories, expressing the number of rational points using exponential sums
has proved to be effective also in studying concrete projective hypersurfaces;
the study of Dwork families, for example by Neal Koblitz \cite{Koblitz},
is one of the most outstanding success of this strategy.
It seems therefore natural to expect the efficacy of this tactic in studying projective varieties of higher codimension.
This point of view, however, does not seem to be taken note of enough
in the study of concrete algebraic varieties of higher codimension.

In the former half of this article, we explain the general method for obtaining a formula
for the number of rational points of projective varieties in terms of Gauss sums.
Then, in the latter half, we apply the method to get an expression for 
the number of rational points of the image $V$ of the triple embedding $\bP^1\hookrightarrow \bP^3$.
Equating the result with the obvious equation $\# V(\bF_q)=q+1$, we get a non-trivial equation on an exponential sum
(Corollary \ref{cor:valueofcharsum});
\[
    \sum_{t_1,t_2,t_3\in\bF_q^{\times}}\theta\big(t_1+t_2+t_3-t_1^2t_2^{-2}t_3-t_1^2t_2^{-3}t_3^2-t_1t_2^{-2}t_3^2\big)
    = 2q^2-3q-1,
\]
where $\theta$ denotes a non-trivial character on $\bF_q$.
This result is also viewed as a description of the Frobenius trace on a pull-back of an 
Artin--Schreier sheaf by an appropriate morphism $(\bG_{\rmm})^3\to\bA^1$.
In the last section, we work on another subscheme of projective spaces,
the image of the Segre embedding $\bP^1\times\bP^2\hookrightarrow\bP^5$,
to see that the character sum $L$ above again appears in this calculation.

The authors expect that the content of this article can help the future study of the following two objects.
The first is the zeta function of concrete varieties;
the method described in this article is expected to make it possible to compute zeta function of
possibly non-rational varieties of higher codimension in projective spaces.
The second is the Kloosterman-sum-like exponential sums;
an explicit calculation of them may be possible by reducing the problem to finding a variety
the number of whose rational points is expressed by using the exponential sum
but is already known in some other methods.

\section*{Conventions and Notations.}

Throughout this article, we fix a prime number $p$, a power $q$ of $p$, and a prime number $l$ different from $p$.

The $n$-dimensional projective space $\bP^n_{\bF_q}$ 
\resp{affine space $\bA^n_{\bF_q}$} over $\bF_q$ is simply denoted by $\bP^n$ \resp{$\bA^n$},
and similarly, the torus $\bG_{\rmm,\bF_q}=\Spec\bF_q[t,t^{-1}]$ is denoted by $\bG_{\rmm}$.
Unless otherwise stated, the term ``rational point'' refers to ``$\bF_q$-rational point''.

The group of multiplicative characters $\bF_q^{\times}\to\overline{\bQ}_l^{\times}$ is denoted by
$\widehat{\bF_q^{\times}}$.
The trivial character is denoted by $\epsilon$.
Each character $\chi\in\widehat{\bF_q^{\times}}$, possibly trivial one, is also regarded as a function on $\bF_q$ by putting
$\chi(0)=0$.

Finally, {\em we fix a non-trivial additive character $\theta\colon\bF_q\to\overline{\bQ}_l^{\times}$ throughout this article.}

\section{Preliminaries on Gauss sum.}

In this section, we briefly recall the definition and some basic properties of Gauss sums
used in this article.
Note that we have fixed a non-trivial additive character $\theta\colon \bF_q\to\overline{\bQ}_l^{\times}$.

\begin{definition}
    For a character $\chi\in\widehat{\bF_q^{\times}}$, the Gauss sum of $\chi$ is defined to be
    \[
        G_\theta(\chi):=\sum_{x\in\bF_q^{\times}}\theta(x)\chi(x).
    \]
    Although the Gauss sum depends on the choice of $\theta$, we usually denote it simply by $G(\chi)$.
\end{definition}

The following three equations on characters are fundamental in doing calculations involving Gauss sums.

\begin{lemma}
    \label{prop:orthogonal}
    The following equations hold.

    {\rm (i)} For each $x\in\bF_q$,
\[
    \sum_{w\in\bF_q}\theta(wx) = \begin{cases} 0 & \text{if } x\neq 0, \\ q & \text{if } x=0.\end{cases}
\]

    {\rm (ii)} For each $\chi\in\widehat{\bF_q^{\times}}$,
\[
    \sum_{x\in\bF_q^{\times}}\chi(x) = \begin{cases} 0 & \text{if } \chi\neq\epsilon, \\ q-1 & \text{if } \chi=\epsilon.\end{cases}
\]

    {\rm (iii)} For each $x\in\bF_q^{\times}$,
    \[
        \sum_{\chi\in \widehat{\bF_q^{\times}}} \chi(x) = \begin{cases} 0 & \text{if } x\neq 1,\\
            q-1 & \text{if } x=1.\end{cases}
    \]
\end{lemma}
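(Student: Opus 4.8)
The plan is to deduce all three identities from a single elementary principle, the ``multiply and permute'' trick for a sum taken over a finite abelian group: if $S$ is such a sum and some symmetry of the group multiplies $S$ by a scalar $\lambda\neq 1$ while permuting the terms, then $S=0$. In each of (i), (ii), (iii) the stated value in the ``trivial'' case is immediate, so the only content is the vanishing in the ``non-trivial'' case.

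For (i): when $x=0$ every summand is $\theta(0)=1$, giving $q$. When $x\neq 0$ the map $w\mapsto wx$ is a bijection of $\bF_q$, so the sum equals $S:=\sum_{y\in\bF_q}\theta(y)$; since $\theta$ is non-trivial, choose $a\in\bF_q$ with $\theta(a)\neq 1$, and the bijection $y\mapsto y+a$ yields $\theta(a)\,S=\sum_{y}\theta(y+a)=S$, hence $S=0$. For (ii): when $\chi=\epsilon$ the sum is $q-1$; when $\chi\neq\epsilon$, pick $b\in\bF_q^{\times}$ with $\chi(b)\neq 1$, and since $x\mapsto bx$ permutes $\bF_q^{\times}$ we get $\chi(b)\sum_{x}\chi(x)=\sum_{x}\chi(bx)=\sum_{x}\chi(x)$, forcing the sum to vanish. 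For (iii): one first records that $\bF_q^{\times}$ is cyclic of order $q-1$, so $\widehat{\bF_q^{\times}}$ also has order $q-1$; thus for $x=1$ the sum is $q-1$. For $x\neq 1$, cyclicity supplies a character $\psi$ with $\psi(x)\neq 1$, and multiplying by $\psi(x)$ permutes the index set $\widehat{\bF_q^{\times}}$, so $\psi(x)\sum_{\chi}\chi(x)=\sum_{\chi}\chi(x)$ and the sum is $0$.

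The only input that is not purely formal occurs in (iii): producing a multiplicative character $\psi$ with $\psi(x)\neq 1$ for a given $x\neq 1$, equivalently the statement that the characters of $\bF_q^{\times}$ separate points and that $\#\widehat{\bF_q^{\times}}=q-1$. I expect this to be the main obstacle, and I would dispatch it by invoking that $\bF_q^{\times}$ is a finite cyclic group, whose character group is (non-canonically) isomorphic to it; sending a fixed generator to a primitive $(q-1)$-st root of unity then gives a character non-trivial on any prescribed non-identity element. Parts (i) and (ii) need nothing beyond $\theta$ being a non-trivial additive homomorphism and $\epsilon$ being the trivial multiplicative one. Alternatively, the whole lemma is an instance of the standard orthogonality relations for the dual of a finite abelian group, applied to $(\bF_q,+)$ for (i) and to $(\bF_q^{\times},\cdot)$ for (ii) and (iii), and one could simply cite that.
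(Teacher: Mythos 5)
Your proof is correct: the ``multiply and permute'' argument is the standard way to establish these orthogonality relations, and your treatment of the one non-formal point in (iii) --- that $\widehat{\bF_q^{\times}}$ has order $q-1$ and separates points, via cyclicity of $\bF_q^{\times}$ --- is exactly right. The paper states this lemma without proof, treating it as the classical orthogonality relations for characters of the finite abelian groups $(\bF_q,+)$ and $(\bF_q^{\times},\cdot)$, so your write-up simply supplies the standard argument the authors omit.
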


Now, we list two basic facts on Gauss sums.

\begin{proposition}
    \label{prop:gaussprodwithbar}
    For each character $\chi\in\widehat{\bF_q^{\times}}$,
    \[
        G(\chi)G\big(\chi^{-1}\big)=\begin{cases} q\chi(-1) & \text{ if } \chi\neq\epsilon,\\ 1 & \text{ if } \chi=\epsilon.
        \end{cases}
    \]
\end{proposition}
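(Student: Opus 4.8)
The plan is to handle the two cases separately, in each case reducing everything to the orthogonality relations of Lemma \ref{prop:orthogonal}.

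First, for $\chi=\epsilon$: since $\epsilon$ takes the value $1$ on $\bF_q^{\times}$, one has $G(\epsilon)=\sum_{x\in\bF_q^{\times}}\theta(x)$, and Lemma \ref{prop:orthogonal}(i) applied with $x=1$ gives $\sum_{x\in\bF_q}\theta(x)=0$, hence $G(\epsilon)=-\theta(0)=-1$. Therefore $G(\epsilon)G(\epsilon^{-1})=G(\epsilon)^2=1$, which is the asserted value.

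Second, for $\chi\neq\epsilon$: I would expand the product as a double sum over $x,y\in\bF_q^{\times}$, use $\theta(x)\theta(y)=\theta(x+y)$ and $\chi(x)\chi^{-1}(y)=\chi(xy^{-1})$, and then substitute $x=ty$ with $t\in\bF_q^{\times}$ (for each fixed $y$ this is a bijection of $\bF_q^{\times}$ onto itself). This turns the expression into
\[
    G(\chi)G\big(\chi^{-1}\big)=\sum_{t\in\bF_q^{\times}}\chi(t)\sum_{y\in\bF_q^{\times}}\theta\big(y(t+1)\big).
\]
The inner sum is computed by Lemma \ref{prop:orthogonal}(i): extending it to a sum over all $y\in\bF_q$ costs a correction of $-\theta(0)=-1$, and the extended sum equals $q$ when $t=-1$ and $0$ otherwise. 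Isolating the term $t=-1$ and substituting back gives $G(\chi)G(\chi^{-1})=q\chi(-1)-\sum_{t\in\bF_q^{\times}}\chi(t)$, and the remaining sum vanishes by Lemma \ref{prop:orthogonal}(ii) since $\chi\neq\epsilon$, leaving $q\chi(-1)$.

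There is no genuine obstacle here; the computation is entirely mechanical once the substitution is made. The only point that needs a little care is the bookkeeping of the ``diagonal'' contribution $t=-1$ (equivalently the pairs with $x+y=0$): it must be extracted before the character orthogonality is applied, since that is precisely where the factor $q\chi(-1)$ comes from, and dropping it would yield the wrong constant term.
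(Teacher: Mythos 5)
Your proof is correct, and it is exactly the standard computation the paper alludes to when it says the result is ``straightforward by using Lemma \ref{prop:orthogonal} (i)'' (the paper omits the details): expand the product, substitute $x=ty$, isolate the $t=-1$ term via (i), and kill the remaining character sum via (ii). Both cases, including the value $G(\epsilon)=-1$ under the convention $\epsilon(0)=0$, check out.
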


\begin{proposition}
    \label{prop:thetabygauss}
    For each $x\in\bF_q^{\times}$,
    \[
        \theta(x) = \frac{1}{q}\sum_{\chi\in\widehat{\bF_q^{\times}}}G\big(\chi^{-1}\big)\chi(x).
    \]
\end{proposition}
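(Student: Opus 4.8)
The plan is to prove the expansion by multiplicative Fourier inversion on the cyclic group $\bF_q^{\times}$, the only inputs being the definition of the Gauss sum and the orthogonality relation Lemma~\ref{prop:orthogonal}(iii). First I would unfold the Gauss sum appearing on the right-hand side by its definition, writing $G(\chi^{-1})=\sum_{y\in\bF_q^{\times}}\theta(y)\chi^{-1}(y)$, and substitute this into the sum $\sum_{\chi}G(\chi^{-1})\chi(x)$.

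Next I would interchange the two finite summations and combine the two character values via $\chi^{-1}(y)\chi(x)=\chi(xy^{-1})$, which is legitimate because $x,y\in\bF_q^{\times}$, obtaining
\[
    \sum_{\chi\in\widehat{\bF_q^{\times}}}G(\chi^{-1})\chi(x)=\sum_{y\in\bF_q^{\times}}\theta(y)\sum_{\chi\in\widehat{\bF_q^{\times}}}\chi(xy^{-1}).
\]
Now Lemma~\ref{prop:orthogonal}(iii) applies to the inner sum: since $xy^{-1}\in\bF_q^{\times}$, it vanishes unless $xy^{-1}=1$, that is $y=x$, in which case it equals $q-1$. Hence only the single term $y=x$ survives and
\[
    \sum_{\chi\in\widehat{\bF_q^{\times}}}G(\chi^{-1})\chi(x)=(q-1)\,\theta(x).
\]
I would stress that the trivial character needs no separate treatment: the term $\chi=\epsilon$, which carries $G(\epsilon)=\sum_{y\in\bF_q^{\times}}\theta(y)=-1$ by Lemma~\ref{prop:orthogonal}(i), is swept up automatically inside the orthogonality sum above, so no ad hoc adjustment is required.

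The step I would watch most carefully --- and the genuine content of the statement --- is the normalizing constant, since everything else is bookkeeping. The orthogonality relation supplies precisely the factor $q-1=\#\widehat{\bF_q^{\times}}$, the cardinality of the character group, so the computation above rearranges to $\theta(x)=\frac{1}{q-1}\sum_{\chi}G(\chi^{-1})\chi(x)$; in other words the reciprocal normalizing constant is the number of characters $q-1$, and the $q$ printed in the denominator is to be read as $q-1$. As a sanity check I would verify this on a small case, say $q=3$ with the quadratic character, confirming that dividing by $q-1$, and not by $q$, is exactly what reproduces $\theta(x)$; with the literal constant $\frac{1}{q}$ the right-hand side instead evaluates to $\frac{q-1}{q}\theta(x)$, which is why pinning down this factor is the whole point of the argument.
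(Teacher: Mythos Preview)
Your argument is correct, and so is your diagnosis of the normalizing constant: the orthogonality relation in Lemma~\ref{prop:orthogonal}(iii) yields exactly $(q-1)\theta(x)$, so the factor in front of the sum must be $\tfrac{1}{q-1}$ rather than $\tfrac{1}{q}$. You can cross-check this against the paper itself: in the proof of Proposition~\ref{prop:hypersurfaceofgm}, where Proposition~\ref{prop:thetabygauss} is actually invoked, each $\theta(\cdot)$ is replaced by $\tfrac{1}{q-1}\sum_{\chi}G(\chi^{-1})\chi(\cdot)$, producing the prefactor $1/(q-1)^N$. So the $q$ in the displayed statement is a misprint, and your computation is the one the paper relies on downstream.

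On the route taken: the paper's sketch points to Lemma~\ref{prop:orthogonal}(ii) rather than (iii). That variant amounts to checking that both sides of the identity have the same Fourier coefficients on $\bF_q^{\times}$: for any test character $\chi'$ one computes $\sum_{x}\theta(x)\chi'^{-1}(x)=G(\chi'^{-1})$ on the left, while on the right one interchanges sums and applies (ii) to $\sum_{x}(\chi\chi'^{-1})(x)$ to isolate $\chi=\chi'$. Your approach instead expands $G(\chi^{-1})$ and applies (iii) directly. The two are dual manifestations of the same Fourier inversion on the cyclic group $\bF_q^{\times}$; neither buys anything the other does not, and your version is arguably the more transparent of the two.
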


The detailed proofs for these propositions are omitted;
they are straightforward by using Lemma \ref{prop:orthogonal} (i) for Proposition \ref{prop:gaussprodwithbar},
and by using Lemma \ref{prop:orthogonal} (ii) for Proposition \ref{prop:thetabygauss}.

\section{Computing the Number of Rational Points --- General Strategy.}

In this section, we describe, in a general setting, a strategy to express
the number of rational points of subschemes of projective spaces over $\bF_q$ in terms of Gauss sums.
The first crucial point is the following reduction process to hypersurfaces;
this process is taken in the proof \cite[Theorem 1]{Dwork:RZFAV} of rationality of zeta functions,
and is now also used in algorithmic theory of zeta functions \cite[3]{Wan}.

\begin{keyobservation}
    \label{key}
    \emph{We obtain a formula for the number of rational points of a closed subscheme of a projective space
    if we obtain a formula for the number of rational points of some appropriate hypersurfaces.}

    In fact, let $V$ be a projective variety of which we want a formula for the number of rational points.
    Take a closed immersion of $V$ in a projective space $\bP^n$,
    and let the image be defined by $m$ homogeneous polynomials $f_1,\dots,f_m$.
    For each choice of integers $1\leq i_1<\dots<i_r\leq m$, we set
    \[
        N_{i_1,\dots,i_r} := \#\Set{ x \in \bP^n(\bF_q) |
            \exists j\in\{i_1,\dots,i_r\},\,\, f_j(x) = 0}.
    \]
    Then, since
    \[
        \#V(\bF_q) = \sum_{r=1}^m \sum_{1\leq i_1<\dots<i_r\leq m}(-1)^{r+1} N_{i_1,\dots,i_r},
    \]
    it suffices to obtain a formula for each $N_{i_1,\dots,i_r}$,
    which is exactly the number of rational points of the hypersurface defined by the
    product $f_{i_1}f_{i_2}\dots f_{i_r}$.
\end{keyobservation}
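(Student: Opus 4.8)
The plan is to prove the displayed inclusion--exclusion identity; once that is in hand the reduction to hypersurfaces is immediate. First I would fix the closed immersion $V\hookrightarrow\bP^n$ and homogeneous generators $f_1,\dots,f_m$ of the homogeneous ideal of the image, and set $A_j:=\Set{x\in\bP^n(\bF_q)|f_j(x)=0}$, the set of rational points on the hypersurface cut out by $f_j$; note that ``$f_j(x)=0$'' is well defined precisely because $f_j$ is homogeneous, so it is independent of the chosen homogeneous-coordinate representative of $x$. By the description of the image of $V$ one has $V(\bF_q)=A_1\cap\dots\cap A_m$. Next I would record the two elementary reformulations that are actually being used: first, $N_{i_1,\dots,i_r}=\#\bigl(A_{i_1}\cup\dots\cup A_{i_r}\bigr)$ by definition; and second, since $\bF_q$ is a field, a product $f_{i_1}(x)\cdots f_{i_r}(x)$ vanishes if and only if some factor vanishes, so $A_{i_1}\cup\dots\cup A_{i_r}$ is exactly the set of rational points of the hypersurface cut out by the product $f_{i_1}\cdots f_{i_r}$. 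This last remark is the sentence that turns each $N_{i_1,\dots,i_r}$ into ``the number of rational points of a hypersurface.''

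With these reductions the statement becomes the purely set-theoretic identity: for subsets $A_1,\dots,A_m$ of a finite set $X$,
\[
    \#\bigcap_{j=1}^m A_j \;=\; \sum_{r=1}^m\sum_{1\le i_1<\dots<i_r\le m}(-1)^{r+1}\,\#\bigl(A_{i_1}\cup\dots\cup A_{i_r}\bigr).
\]
I would prove this by comparing indicator functions on $X$. Writing $\delta_j:=\mathbf{1}_{A_j}$, one has $\mathbf{1}_{\bigcap_j A_j}=\prod_{j=1}^m\delta_j$ and $\mathbf{1}_{A_{i_1}\cup\dots\cup A_{i_r}}=1-\prod_{k=1}^r(1-\delta_{i_k})$. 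Expanding the right-hand side as a sum over nonempty subsets $S\subseteq\{1,\dots,m\}$ of $(-1)^{|S|+1}\bigl(1-\prod_{j\in S}(1-\delta_j)\bigr)$, and using $\sum_{S\subseteq\{1,\dots,m\}}(-1)^{|S|}=0$ together with $\sum_{S\subseteq\{1,\dots,m\}}\prod_{j\in S}\bigl(-(1-\delta_j)\bigr)=\prod_{j=1}^m\delta_j$, the alternating sum collapses to $\prod_{j=1}^m\delta_j=\mathbf{1}_{\bigcap_j A_j}$. Summing this identity of functions over all $x\in X=\bP^n(\bF_q)$ yields the displayed formula, hence the claim. (Alternatively one may substitute ordinary inclusion--exclusion $N_{i_1,\dots,i_r}=\sum_{\emptyset\neq T\subseteq\{i_1,\dots,i_r\}}(-1)^{|T|+1}\#\bigcap_{j\in T}A_j$ into the right-hand side and check that, for each $T$, the resulting coefficient $\sum_{T\subseteq S\subseteq\{1,\dots,m\}}(-1)^{|S|+|T|}$ equals $1$ if $T=\{1,\dots,m\}$ and $0$ otherwise; this is the same computation organized differently.)

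The statement is soft, so there is no analytic obstacle; the only points requiring care are bookkeeping ones. Concretely, one must state the correspondence in the second reformulation for rational points on the nose --- a product being zero over a field is equivalent to a factor being zero, which is what lets the union $A_{i_1}\cup\dots\cup A_{i_r}$ be replaced by a single hypersurface --- and one must keep track of the $2^m-1$ terms when collapsing the alternating sum. The genuinely substantive input, which is not part of this observation, is choosing an embedding and defining equations $f_1,\dots,f_m$ for which the product hypersurfaces are concretely computable in terms of Gauss sums; the remainder of the paper is devoted to carrying this out in the cases $\bP^1\hookrightarrow\bP^3$ and $\bP^1\times\bP^2\hookrightarrow\bP^5$.
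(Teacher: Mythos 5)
Your proposal is correct and follows the same route as the paper: the Observation rests on the dual inclusion--exclusion identity expressing the cardinality of an intersection through cardinalities of unions, which the paper simply asserts and you verify in the standard way via indicator functions, together with the remark that over a field a product of homogeneous forms vanishes at a point iff some factor does. Nothing in your write-up deviates from or adds to the paper's intended argument beyond supplying these routine details.
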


\begin{remark}
    \rm
    A formula for rational points of not necessarily closed subschemes of projective spaces is also obtained
    because such a subscheme is the difference of two closed subschemes.
\end{remark}

For obtaining a formula for the number of rational points of a hypersurface of $\bP^n$,
it suffices to count the number of rational points of the hypersurface of $\bA^{n+1}$ defined by
the same polynomial.
Moreover, the problem reduces to obtaining a formula for
\[
    \#\Set{ (x_1,\dots,x_{n+1}) \in (\bG_{\rmm})^{n+1} | [x_1:\dots:x_{n+1}]\in V(\bF_q)}
\]
because the locus with at least one coordinates being zero is covered by
hypersurfaces of lower-dimensional affine plane.

\begin{remark}
    \rm
When dealing with a specific projective variety $V\subset\bP^n$ (or equivalently, specific homogeneous
polynomials $f_1,\dots,f_m$), it is often easier to compute firstly the number
\[
    N_0 := \#\Set{ [x_1:\dots:x_{n+1}]\in V(\bF_q) | \exists i\in\{1,\dots,n+1\},\,\, x_i=0}
\]
directly, and to compute secondly, for each choice $1\leq i_1<\dots<i_r\leq m$, the number
\begin{align*}
    N_{i_1,\dots,i_r}^{\times} := \#\big\{\, & [x_1:\dots:x_{n+1}] \in \bP^n(\bF_q) \,\big|\\
    & \hspace{1em} \forall i\in\{1,\dots,n+1\},\,\,x_i\neq 0 \,\,\text{and}\,\,\exists j\in\{i_1,\dots,i_r\},\,\,f_j(x)=0\,\big\},
\end{align*}
which is $1/(q-1)$ times the number of rational points of the hypersurface of $(\bG_{\rmm})^{n+1}$ defined by 
the product $f_{i_1}f_{i_2}\dots f_{i_r}$. Under this notation, our formula becomes
\[
    \# V(\bF_q) = N_0+\sum_{r=1}^m\sum_{1\leq i_1<\dots<i_r\leq m}(-1)^{r+1}N_{i_1,\dots,i_r}^{\times}.
\]
Our calculations in Sections 3 and 4 also go in this manner.
\label{rem:specific}
\end{remark}

We conclude this section by giving a formula for the number of $\bF_q$-rational points of
an arbitrary hypersurface of $(\bG_{\rmm})^n$ in terms of Gauss sums.

Let us introduce a convention which we need to state the result.
Let $M=(m_{ij})_{i,j}$ be an $n\times N$ matrix with coefficients in $\bZ$.
Then, $M$ naturally defines the group homomorphism
$\varphi(M)\colon \big(\widehat{\bF_q^{\times}}\big)^N\to \big(\widehat{\bF_q^{\times}}\big)^n$,
explicitly expressed as
\[
    \varphi(M) \big((\chi_i)_{i=1,\dots,N}\big) = \left( \chi_1^{m_{j1}}\dots\chi_N^{m_{jN}}\right)_{j=1,\dots,n}.
\]
We always regard elements of $\big(\widehat{\bF_q^{\times}}\big)^N$ and
$\big(\widehat{\bF_q^{\times}}\big)^n$ as column vectors.

The following proposition gives a formula we are asking for.

\begin{proposition}[\cite{Delsarte}, \cite{Gomida}]
    \label{prop:hypersurfaceofgm}
    Let $n$ and $N$ be positive integers, let $c_1,\dots,c_N$ be elements of $\bF_q^{\times}$, and
    let $R=(r_{ij})_{i,j}\in M_{n,N}(\bZ)$ be an $n\times N$ matrix.
    Define a polynomial $f(X_1,\dots,X_n)\in\bF_q[X_1,\dots,X_n]$ by
    \[
        f(X_1,\dots,X_n)=\sum_{j=1}^Nc_jX_1^{r_{1j}}\dots X_n^{r_{nj}}.
    \]
    Then, the number of $n$-tuples $(x_1,\dots,x_n)\in(\bF_q^{\times})^n$ satisfying $f(x_1,\dots,x_n)=0$ equals
    \[
        \frac{(q-1)^n}{q} + \frac{(q-1)^{n+1-N}}{q}\sum_{ {}^{\rt}(\chi_1,\dots,\chi_N)\in \Ker(\varphi(\widetilde{R}))}\prod_{j=1}^NG\big(\chi_j^{-1}\big)\chi_j(c_j),
    \]
    where
    \[
        \widetilde{R} = \left(\begin{array}{ccc} \multicolumn{3}{c}{\raisebox{-4pt}{\large $R$}}\\ && \\ \hline 1 & \dots & 1\end{array}\right).
    \]
\end{proposition}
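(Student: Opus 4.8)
The plan is to detect the vanishing of $f$ with the nontrivial additive character $\theta$ and then convert the resulting additive character sums into multiplicative ones by expanding $\theta$ in Gauss sums (Proposition \ref{prop:thetabygauss}). First I would observe, via Lemma \ref{prop:orthogonal} (i), that $\tfrac1q\sum_{w\in\bF_q}\theta\big(wf(x)\big)$ equals $1$ if $f(x)=0$ and $0$ otherwise, so that
\[
    \#\Set{x\in(\bF_q^{\times})^n | f(x)=0}=\frac1q\sum_{w\in\bF_q}\ \sum_{x\in(\bF_q^{\times})^n}\theta\big(wf(x)\big).
\]
The contribution of $w=0$ is $(q-1)^n/q$, which is the first summand of the asserted formula, and it remains to treat the sum over $w\in\bF_q^{\times}$.

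For $w\ne 0$ I would factor $\theta\big(wf(x)\big)=\prod_{j=1}^{N}\theta\big(wc_j x_1^{r_{1j}}\cdots x_n^{r_{nj}}\big)$ by additivity of $\theta$; every argument here lies in $\bF_q^{\times}$, since $w$, the $c_j$ and all the $x_i$ are units, which is exactly why the statement concerns the torus rather than affine space, and so Proposition \ref{prop:thetabygauss} applies to each factor. Introducing a character $\chi_j$ for each monomial and using multiplicativity, $\chi_j\big(wc_j x_1^{r_{1j}}\cdots x_n^{r_{nj}}\big)=\chi_j(w)\chi_j(c_j)\prod_{i=1}^{n}\chi_j(x_i)^{r_{ij}}$, I would interchange the finitely many summations so that the sums over $w$ and over each $x_i$ split off. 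Up to the constant coming from Proposition \ref{prop:thetabygauss} and the factor $\tfrac1q$ above, what remains is
\[
    \sum_{\chi_1,\dots,\chi_N}\Big(\prod_{j=1}^{N}G\big(\chi_j^{-1}\big)\chi_j(c_j)\Big)\Big(\sum_{w\in\bF_q^{\times}}\prod_{j=1}^{N}\chi_j(w)\Big)\prod_{i=1}^{n}\Big(\sum_{x_i\in\bF_q^{\times}}\prod_{j=1}^{N}\chi_j(x_i)^{r_{ij}}\Big).
\]

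Now Lemma \ref{prop:orthogonal} (ii) evaluates each of the $n+1$ decoupled one-variable sums: the $w$-sum is $q-1$ if $\chi_1\cdots\chi_N=\epsilon$ and $0$ otherwise, and for each $i$ the $x_i$-sum is $q-1$ if $\chi_1^{r_{i1}}\cdots\chi_N^{r_{iN}}=\epsilon$ and $0$ otherwise. These $n+1$ conditions together are precisely the requirement that ${}^{\rt}(\chi_1,\dots,\chi_N)$ lie in $\Ker(\varphi(\widetilde R))$ --- the top $n$ rows of $\widetilde R$ are the rows of $R$ and impose the $x_i$-conditions, while the bottom row $(1,\dots,1)$ imposes the $w$-condition. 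Hence only the kernel vectors contribute, each with weight $(q-1)^{n+1}$, and collecting all the constants produces the second summand $\frac{(q-1)^{n+1-N}}{q}\sum_{{}^{\rt}(\chi_1,\dots,\chi_N)\in\Ker(\varphi(\widetilde R))}\prod_{j}G(\chi_j^{-1})\chi_j(c_j)$. I expect the only real difficulty to be bookkeeping in this last step --- tracking the powers of $q-1$ and of $q$ and matching each orthogonality relation with the correct row of $\widetilde R$; the one conceptual point, namely that passing to the torus makes every argument of $\theta$ and of the $\chi_j$ invertible, so that Proposition \ref{prop:thetabygauss} and the multiplicativity of characters apply without caveat, has already been used above.
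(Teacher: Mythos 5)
Your proposal is correct and follows essentially the same route as the paper's proof: detect $f(x)=0$ with $\tfrac1q\sum_{w\in\bF_q}\theta\big(wf(x)\big)$, split off $w=0$, expand each $\theta$ of a monomial into Gauss sums, and use the orthogonality relations to cut the character sum down to $\Ker\varphi(\widetilde R)$, with the last row of $\widetilde R$ accounting for the auxiliary variable $w$. The one piece of bookkeeping you defer is worth making explicit: the expansion of $\theta$ must carry the factor $1/(q-1)$ per character (as in the paper's proof, where the constant $1/(q-1)^N$ appears), so that the weight $(q-1)^{n+1}$ of each kernel element combines with $1/(q-1)^N$ and the overall $1/q$ to give exactly $(q-1)^{n+1-N}/q$.
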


\begin{proof}
    Lemma \ref{prop:orthogonal} (i) shows that
    \begin{align*}
        & \sum_{x_1,\dots,x_n\in\bF_q^{\times}}\sum_{w\in\bF_q}\theta\big(wf(x_1,\dots,x_n)\big)\\
        = & q\#\Set{(x_1,\dots,x_n)\in(\bF_q^{\times})^n | f(x_1,\dots,x_n)=0},
    \end{align*}
    thus it suffices to calculate the left-hand side and divide the result by $q$.
    Because $\theta(0)=1$ and because $\theta$ transforms addition to multiplication,
    the left-hand side equals
    \begin{align*}
         & (q-1)^n+\sum_{x_1,\dots,x_n,w\in\bF_q^{\times}}\theta\big(wf(x_1,\dots,x_n)\big)\\
         = & (q-1)^n+\sum_{x_1,\dots,x_n,w\in\bF_q^{\times}}\prod_{j=1}^N\theta\big(c_jwx_1^{r_{1j}}\dots x_n^{r_{nj}}\big).
     \end{align*}
    Now, Proposition \ref{prop:thetabygauss} shows that the second term equals
    \[
        \frac{1}{(q-1)^N}\sum_{x_1,\dots,x_n,w\in\bF_q^{\times}}\prod_{j=1}^N\sum_{\chi_j\in\widehat{\bF_q^{\times}}}G\big(\chi_j^{-1}\big)\chi_j\big(c_jwx_1^{r_{1j}}\dots x_n^{r_{nj}}\big)
    \]
    and by using the multiplicativity of characters, we know that it equals
    \begin{align*}
        &\frac{1}{(q-1)^N}\sum_{\chi_1,\dots,\chi_N\in\widehat{\bF_q^{\times}}}\Bigg\{\prod_{j=1}^{N}G\big(\chi_j^{-1}\big)\chi_j(c_j)\\ & \hspace{12em} \prod_{i=1}^n \sum_{x_i\in\bF_q^{\times}}\big(\chi_1^{r_{i1}}\dots\chi_N^{r_{iN}}\big)(x_i) \sum_{w\in\bF_q^{\times}}(\chi_1\dots \chi_N)(w)\Bigg\}.
    \end{align*}
    Because of Lemma \ref{prop:orthogonal} (ii), each summand with respect to the outer sum vanishes
    unless $\chi_1^{r_{i1}}\dots\chi_N^{r_{iN}}=\epsilon$ for each $i=1,\dots,n$ and $\chi_1\dots\chi_N=\epsilon$,
    that is, unless ${}^{\rt}(\chi_1,\dots,\chi_N)\in\Ker\varphi(\widetilde{R})$;
    if it is the case, the summand equals $(q-1)^{n+1}\prod_{j=1}^NG\big(\chi_j^{-1}\big)\chi_j(c_j)$.
    This completes the proof.
\end{proof}

\section{The Image of Triple Embedding $\bP^1\hookrightarrow\bP^3$.} 
In this section, we compute the number of rational points of $\bP^1$
by identifying it as the image $V$ of triple embedding $\bP^1\hookrightarrow\bP^3$
and by following the strategy in the previous section.
Later, we compare the result with the obvious answer $\#\bP^1(\bF_q)=q+1$.

The variety $V$ is defined in $\bP^3=\Proj\big(\bF_q[x_1,x_2,x_3,x_4]\big)$ 
by the following three polynomials:
\[
	f_1(x)= x_1x_3-x_2^2,\quad f_2(x)= x_2x_4-x_3^2,\quad f_3(x)= x_1x_4-x_2x_3.
\]
We follow the notation in Remark \ref{rem:specific}; in particular, we have
\begin{equation}
    \# V(\bF_q) = N_0 + N_1^{\times} + N_2^{\times} + N_3^{\times} - N_{12}^{\times} - N_{13}^{\times} - N_{23}^{\times} + N_{123}^{\times}.
    \label{eq:summarize1}
\end{equation}

Firstly, we directly compute the number $N_0$.
In our case, we easily see that
\begin{align*}
    & \Set{[x_1:x_2:x_3:x_4]\in V(\bF_q) | \exists i\in\{1,2,3,4\},\,\, x_i=0}\\
    = & \big\{[1:0:0:0], [0:0:0:1]\big\};
\end{align*}
thus $N_0=2$.

Secondly, we compute the number
\[
    N_i^{\times} = \#\Set{ [x_1:x_2:x_3:x_4]\in \bP^4(\bF_q) | f_i(x)=0 \,\,\, \text{and} \,\,\, \forall j,\, x_j\neq 0} 
\]
for $i\in\{1,2,3\}$.
Although we could employ Proposition \ref{prop:hypersurfaceofgm}, a direct calculation is easy in this case.
Namely, if $i=1$, we may assume $x_4=1$, and after a free choice of $x_2,x_3\in\bF_q^{\times}$,
the remaining $x_1\in\bF_q^{\times}$ is uniquely determined;
thus we get $N_1^{\times}=(q-1)^2$. By a similar argument, we see that
\[
    N_1^{\times}=N_2^{\times}=N_3^{\times}=(q-1)^2.
\]

Thirdly, we compute the number
\[
    N_{ij}^{\times} = \#\Set{ [x_1:x_2:x_3:x_4] \in \bP^4(\bF_q) | f_i(x)f_j(x)=0 \,\,\, \text{and} \,\,\, \forall j,\, x_j\neq 0}
\]
for $(i,j)\in\{(1,2), (1,3), (2,3)\}$.
Since
\[
    f_1(x)f_2(x) = x_1x_2x_3x_4 - x_1x_3^3 - x_2^3x_4 + x_2^2x_3^2,
\]
Proposition \ref{prop:hypersurfaceofgm} shows that
\begin{equation}
    (q-1) N_{12}^{\times} = \frac{(q-1)^4}{q}+\frac{q-1}{q}\sum_{{}^{\rt}(\chi_1,\dots,\chi_4)\in\Ker\varphi(\widetilde{R_{12}})}\prod_{j=1}^4 G\big(\chi_j^{-1}\big)\cdot\chi_2\chi_3(-1),
    \label{eq:quadratic1}
\end{equation}
where
\[
    \widetilde{R_{12}} = \begin{pmatrix} 1 & 1 & 0 & 0 \\ 1 & 0 & 3 & 2 \\ 1 & 3 & 0 & 2 \\ 1 & 0 & 1 & 0 \\ 1 & 1 & 1 & 1\end{pmatrix}.
\]
Since an elementary linear algebra with coefficient in $\bZ/(q-1)\bZ$ 
shows that $\Ker\varphi(\widetilde{R_{12}})=\Set{ {}^{\rt}(\chi, \chi^{-1}, \chi^{-1}, \chi) | \chi\in\widehat{\bF_q^{\times}}}$,
we have
\[
    (q-1) N_{12}^{\times} = \frac{(q-1)^4}{q}+\frac{q-1}{q}\sum_{\chi\in\widehat{\bF_q^{\times}}}G\big(\chi^{-1}\big)G\big(\chi\big)G\big(\chi^{-1}\big)G\big(\chi\big).
\]
With the aid of Proposition \ref{prop:gaussprodwithbar}, we get (after dividing both sides by $q-1$)
\begin{align}
    \label{eq:121}
    N_{12}^{\times} & = \frac{(q-1)^3}{q}+\frac{1}{q}\left(\sum_{\chi\in\widehat{\bF_q^{\times}}\setminus\{\epsilon\}}\big(q\chi(-1)\big)^2+(-1)^4\right)\\
    & = \frac{(q-1)^3}{q}+\frac{1}{q}\left\{q^2(q-2)+1\right\} = 2q^2-5q+3. \nonumber
\end{align}
Similarly, since
\[
    f_2(x)f_3(x) = x_1x_2x_4^2-x_2^2x_3x_4-x_1x_3^2x_4+x_2x_3^3,
\]
$(q-1)N_{23}^{\times}$ equals (\ref{eq:quadratic1}) with $\widetilde{R_{12}}$ replaced by
\[
\widetilde{R_{13}} = \begin{pmatrix} 1 & 0 & 1 & 0 \\ 1 & 2 & 0 & 1 \\ 0 & 1 & 2 & 3 \\ 2 & 1 & 1 & 0 \\ 1 & 1 & 1 & 1\end{pmatrix}.
\]
In fact, we may directly show that $\Ker\varphi(\widetilde{R_{12}})=\Ker\varphi(\widetilde{R_{13}})$,
and therefore we have $N_{13}^{\times}=N_{23}^{\times}$. Moreover, the symmetry shows $N_{23}^{\times}=N_{13}^{\times}$, and we have
\[
    N_{12}^{\times}=N_{13}^{\times}=N_{23}^{\times}=2q^2-5q+3.
\]

Finally, we compute the number $N_{123}^{\times}$. Since
\[
    f_1(x)f_2(x)f_3(x) = x_1^2x_2x_3x_4^2 - x_1^2x_3^3x_4 + x_1x_2x_3^4 - x_1x_2^3x_4^2 + x_2^4x_3x_4 - x_2^3x_3^3,
\]
Proposition \ref{prop:hypersurfaceofgm} shows that $(q-1)N_{123}^{\times}$ equals
\begin{equation}
    \frac{(q-1)^4}{q} + \frac{1}{q(q-1)}\sum_{{}^{\rt}(\chi_1,\dots,\chi_6)\in\Ker\big(\varphi(\widetilde{R_{123}})\big)}
    \prod_{j=1}^6 G\big(\chi_j^{-1}\big)(\chi_2\chi_4\chi_6)(-1)
    \label{eq:1231}
\end{equation}
with
\[
    \widetilde{R_{123}} = \begin{pmatrix}2&2&1&1&0&0\\1&0&1&3&4&3\\1&3&4&0&1&3\\2&1&0&2&1&0\\1&1&1&1&1&1\end{pmatrix}.
\]
Again, an elementary linear algebra shows that $\Ker\varphi(\widetilde{R_{123}})$ equals
\begin{equation}
    \Set{ {}^{\rt}(\chi_4^{-2}\chi_5^{-2}\chi_6^{-1}, \chi_4^2\chi_5^3\chi_6^2, \chi_4^{-1}\chi_5^{-2}\chi_6^{-2}, \chi_4, \chi_5, \chi_6) | \chi_4,\chi_5,\chi_6\in\widehat{\bF_q^{\times}}}.
    \label{eq:kernel1}
\end{equation}
This time, we cannot simplify the sum in the second term of (\ref{eq:1231}) as we did in (\ref{eq:121}).
Instead, we rewrite this sum by using a more simple character sum.

By the definition of Gauss sums, the sum in the second term in (\ref{eq:1231}) equals
\[
    \sum_{ {}^{\rt}(\chi_1,\dots,\chi_6)\in\Ker\varphi(\widetilde{R_{123}})}
    \prod_{i=1}^6\left\{\sum_{t_i\in\bF_q^{\times}}\theta(t_i)\chi_i^{-1}(t_i)\right\}(\chi_4\chi_5\chi_6)(-1).
\]
Now, the description ($\ref{eq:kernel1}$) of $\Ker\varphi(\widetilde{R_{123}})$ shows that this equals
\begin{align*}
    & \sum_{\chi_4,\chi_5,\chi_6\in\widehat{\bF_q^{\times}}}\sum_{t_1,\dots,t_6\in\bF_q^{\times}}\left(\prod_{i=1}^6\theta(t_i)\right)\big(\chi_4^2\chi_5^2\chi_6\big)(t_1)\big(\chi_4^{-2}\chi_5^{-3}\chi_6^{-2}\big)(t_2)\big(\chi_4\chi_5^2\chi_6^2\big)(t_3)\\
    & \hspace{12em} \chi_4^{-1}(t_4)\chi_5^{-1}(t_5)\chi_6^{-1}(t_6)(\chi_4\chi_5\chi_6)(-1) \\
    = & \sum_{t_1,\dots,t_6\in\bF_q^{\times}}\theta\left(\sum_{i=1}^6 t_i\right)\sum_{\chi_4\in\widehat{\bF_q^{\times}}}\chi_4\big(-t_1^2t_2^{-2}t_3t_4^{-1}\big)\\
    & \hspace{12em}\sum_{\chi_5\in\widehat{\bF_q^{\times}}}\chi_5\big(-t_1^2t_2^{-3}t_3^2t_5^{-1}\big)
    \sum_{\chi_6\in\widehat{\bF_q^{\times}}}\chi_6\big(-t_1t_2^{-2}t_3^2t_6^{-1}\big).
\end{align*}
Applying Lemma \ref{prop:orthogonal} (iii), this equals
\[
    (q-1)^3\sum_{t_1,t_2,t_3\in\bF_q^{\times}}\theta\left(t_1+t_2+t_3-t_1^2t_2^{-2}t_3-t_1^2t_2^{-3}t_3^2-t_1t_2^{-2}t_3^2\right),
\]
and therefore, we get the equation
\[
    N_{123}^{\times} = \frac{(q-1)^3}{q}+\frac{q-1}{q}\sum_{t_1,t_2,t_3\in\bF_q^{\times}}\theta\left(t_1+t_2+t_3-t_1^2t_2^{-2}t_3-t_1^2t_2^{-3}t_3^2-t_1t_2^{-2}t_3^2\right).
\]

Now, we summarize the calculation and we get the result.

\begin{theorem}
    Let $L$ be the sum
    \[
        L = \sum_{t_1,t_2,t_3\in\bF_q^{\times}}\theta\left(t_1+t_2+t_3-t_1^2t_2^{-2}t_3-t_1^2t_2^{-3}t_3^2-t_1t_2^{-2}t_3^2\right).
    \]
    Then, $\# V(\bF_q)= -2q^2 + 6q - 1+ ((q-1)L-1)/q$.
    \label{thm:result1}
\end{theorem}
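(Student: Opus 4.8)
The plan is to assemble Theorem~\ref{thm:result1} by substituting the values of $N_0$ and of the various $N^\times$'s, computed above, into the inclusion–exclusion formula \eqref{eq:summarize1}. All the constituent numbers have already been determined: we have $N_0 = 2$; we have $N_1^\times = N_2^\times = N_3^\times = (q-1)^2$; we have $N_{12}^\times = N_{13}^\times = N_{23}^\times = 2q^2 - 5q + 3$; and we have the expression
\[
    N_{123}^\times = \frac{(q-1)^3}{q} + \frac{q-1}{q}\,L.
\]
So the proof is essentially a bookkeeping computation, and I would present it as such.

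Concretely, I would first collect the ``polynomial'' part. From \eqref{eq:summarize1},
\[
    \#V(\bF_q) = N_0 + 3(q-1)^2 - 3(2q^2 - 5q + 3) + N_{123}^\times,
\]
using $N_1^\times + N_2^\times + N_3^\times = 3(q-1)^2$ and $N_{12}^\times + N_{13}^\times + N_{23}^\times = 3(2q^2-5q+3)$. Then $N_0 + 3(q-1)^2 - 3(2q^2-5q+3) = 2 + 3q^2 - 6q + 3 - 6q^2 + 15q - 9 = -3q^2 + 9q - 4$. Adding $N_{123}^\times = \frac{(q-1)^3}{q} + \frac{q-1}{q}L$ and writing $\frac{(q-1)^3}{q} = \frac{q^3 - 3q^2 + 3q - 1}{q} = q^2 - 3q + 3 - \frac{1}{q}$ gives
\[
    \#V(\bF_q) = (-3q^2 + 9q - 4) + (q^2 - 3q + 3) - \frac{1}{q} + \frac{q-1}{q}L = -2q^2 + 6q - 1 + \frac{(q-1)L - 1}{q},
\]
which is exactly the claimed identity. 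I would double-check the constant term: $-4 + 3 = -1$, so the rational part is $-\frac1q + \frac{q-1}{q}L = \frac{(q-1)L - 1}{q}$, as asserted.

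There is essentially no obstacle here; the only thing to be careful about is arithmetic sign errors in combining the three $N_{ij}^\times$ terms (they enter with a minus sign in inclusion–exclusion) and in expanding $(q-1)^3/q$. One could also present the identity slightly differently — e.g.\ clearing denominators, $q\cdot\#V(\bF_q) = -2q^3 + 6q^2 - q + (q-1)L - 1$ — but I would keep the form stated in the theorem since it is what is wanted for the subsequent Corollary~\ref{cor:valueofcharsum}, where one equates it with $q+1$ and solves for $L$.
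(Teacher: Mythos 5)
Your proposal is correct and is exactly the paper's proof: substitute $N_0=2$, $N_i^{\times}=(q-1)^2$, $N_{ij}^{\times}=2q^2-5q+3$, and $N_{123}^{\times}=(q-1)^3/q+(q-1)L/q$ into the inclusion--exclusion identity (\ref{eq:summarize1}) and simplify. The arithmetic checks out.
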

\begin{proof}
    Substituting the results above in the equation (\ref{eq:summarize1}), we get
    \[
        \# V(\bF_q) = 2 + 3(q-1)^2 - 3(2q^2-5q+3) + (q-1)^3/q + (q-1)L/q.
    \]
\end{proof}

As a corollary, we obtain an explicit description of the exponential sum $L$.

\begin{corollary} $L = 2q^2-3q-1$.
    \label{cor:valueofcharsum}
\end{corollary}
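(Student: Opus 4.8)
The plan is to combine Theorem \ref{thm:result1} with the tautological fact that $V$, being isomorphic to $\bP^1$ via the triple embedding, satisfies $\# V(\bF_q) = \#\bP^1(\bF_q) = q+1$. Thus one simply equates the two expressions for $\# V(\bF_q)$ and solves for $L$.

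Concretely, first I would recall that $V$ is the image of the closed immersion $\bP^1\hookrightarrow\bP^3$, $[s:t]\mapsto[s^3:s^2t:st^2:t^3]$, so $V\cong\bP^1$ as schemes over $\bF_q$; hence $\# V(\bF_q) = q+1$. Next I would substitute this into the formula of Theorem \ref{thm:result1}:
\[
    q+1 = -2q^2 + 6q - 1 + \frac{(q-1)L-1}{q}.
\]
Multiplying through by $q$ gives $q^2+q = -2q^3+6q^2-q + (q-1)L - 1$, i.e. $(q-1)L = 2q^3 - 5q^2 + 2q + 1$. Finally I would check that $q-1$ divides the right-hand side by polynomial division: $2q^3 - 5q^2 + 2q + 1 = (q-1)(2q^2 - 3q - 1)$. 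Dividing by $q-1$ yields $L = 2q^2 - 3q - 1$, as claimed.

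There is essentially no obstacle here: every ingredient has already been established, and the only genuine content is the observation that the triple embedding is a closed immersion with image $V$, so that its image has exactly $q+1$ rational points. The one point worth a moment's care is the arithmetic of the division by $q-1$; one should verify that the remainder really vanishes (equivalently, that the numerator is zero at $q=1$, which indeed gives $2 - 5 + 2 + 1 = 0$), confirming consistency of the whole computation.
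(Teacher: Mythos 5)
Your proposal is correct and follows exactly the paper's argument: equate $\#V(\bF_q)=q+1$ (since $V\cong\bP^1$) with the formula of Theorem \ref{thm:result1} and solve the resulting linear equation for $L$; the arithmetic, including the factorization $2q^3-5q^2+2q+1=(q-1)(2q^2-3q-1)$, checks out.
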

\begin{proof}
    Since $\# V(\bF_q) = \#\bP^1(\bF_q)=q+1$, Theorem \ref{thm:result1} gives
    an equation on $L$. Solving it gives the corollary.
\end{proof}

This corollary can be viewed as an explicit description of the trace function of a rank-one $l$-adic sheaf.

\begin{corollary}
    Define a morphism $f\colon \bG_{\rmm}^3\to \bA^1$ by
    \[
        f(t_1,t_2, t_3) = t_1+t_2+t_3-t_1^2t_2^{-2}t_3-t_1^2t_2^{-3}t_3^2-t_1t_2^{-2}t_3^2,
    \]
    and denote the Artin--Schreier sheaf on $\bA^1$ associated to $\theta$ by $\sL_{\theta}$.
    Then, the equation
    \[
        \sum_{i=0}^\infty (-1)^i \Tr\big(\Frob; H_{\rc}^i(\bG_{\rmm}^3\times_{\bF_q}\overline{\bF_q}, f^{\ast}\sL_{\theta})\big) = 2q^2-3q-1.
    \]
    holds.
\end{corollary}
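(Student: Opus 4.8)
The plan is to deduce this statement from the Grothendieck--Lefschetz trace formula, combined with Corollary \ref{cor:valueofcharsum}. Since $\bG_{\rmm}^3$ is a smooth separated $\bF_q$-scheme of finite type of dimension $3$ and $f^{\ast}\sL_{\theta}$ is a lisse (in particular constructible) $\overline{\bQ}_l$-sheaf on it, the groups $H_{\rc}^i(\bG_{\rmm}^3\times_{\bF_q}\overline{\bF_q},f^{\ast}\sL_{\theta})$ are finite-dimensional over $\overline{\bQ}_l$ and vanish outside the range $0\le i\le 6$; in particular the alternating sum on the left-hand side is a finite sum, hence well defined.

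First I would apply the trace formula: for a constructible $\overline{\bQ}_l$-sheaf $\mathcal{F}$ on a separated $\bF_q$-scheme $X$ of finite type,
\[
    \sum_{i=0}^{\infty}(-1)^i\Tr\big(\Frob;H_{\rc}^i(X\times_{\bF_q}\overline{\bF_q},\mathcal{F})\big)=\sum_{x\in X(\bF_q)}\Tr\big(\Frob_x;\mathcal{F}_{\bar x}\big),
\]
with the geometric Frobenius on both sides, applied to $X=\bG_{\rmm}^3$ and $\mathcal{F}=f^{\ast}\sL_{\theta}$. It then remains to identify the trace function $x\mapsto\Tr(\Frob_x;(f^{\ast}\sL_{\theta})_{\bar x})$ on $\bG_{\rmm}^3(\bF_q)$. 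Since trace functions are compatible with pullback, its value at $t=(t_1,t_2,t_3)$ equals the value of the trace function of $\sL_{\theta}$ at $f(t)\in\bA^1(\bF_q)=\bF_q$, which by the defining property of the Artin--Schreier sheaf is $\theta(f(t))$. Hence the right-hand side of the trace formula equals
\[
    \sum_{t_1,t_2,t_3\in\bF_q^{\times}}\theta\big(t_1+t_2+t_3-t_1^2t_2^{-2}t_3-t_1^2t_2^{-3}t_3^2-t_1t_2^{-2}t_3^2\big)=L,
\]
and Corollary \ref{cor:valueofcharsum} gives $L=2q^2-3q-1$, which is the assertion.

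I do not expect a genuine obstacle here: the corollary is essentially a cohomological repackaging of Corollary \ref{cor:valueofcharsum} via the trace formula, and the only points needing (routine) care are conventional — using the geometric Frobenius consistently, and fixing the normalization of $\sL_{\theta}$ so that its trace function is $\theta$ rather than $\theta^{-1}$. Even the latter is immaterial: the substitution $(t_1,t_2,t_3)\mapsto(-t_1,-t_2,-t_3)$ sends $f$ to $-f$ (each of the six monomials has odd total weighted degree), so $\sum_t\theta(f(t))=\sum_t\theta(-f(t))$, i.e. $L$ is invariant under $\theta\mapsto\theta^{-1}$.
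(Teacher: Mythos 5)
Your proposal is correct and is exactly the paper's argument: the paper's proof is the single sentence that the left-hand side equals $L$ by the Grothendieck trace formula, and you have simply spelled out the (routine) identification of the trace function of $f^{\ast}\sL_{\theta}$ before invoking Corollary \ref{cor:valueofcharsum}. The added observation that $L$ is insensitive to replacing $\theta$ by $\theta^{-1}$ is a nice touch but not needed.
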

\begin{proof}
    The left-hand side equals $L$ by the Grothendieck trace formula \cite[III B 1.3]{SGA5}.
\end{proof}

\begin{remark}
    \rm
    Corollary \ref{cor:valueofcharsum} can be also rewritten in terms of general hypergeometric functions 
    \cite[8.1]{GGR:HFAF}, \cite{GG:HFFF},
    which Lei Fu \cite{Fu:GKZ} calls GKZ hypergeometric sum.
    In fact, let $A$ be the $3\times 6$-matrix
    \[
        A = \begin{pmatrix} 1 & 0 & 0 & 2 & 2 & 1 \\ 0 & 1 & 0 & -2 & -3 & -2 \\ 0 & 0 & 1 & 1 & 2 & 2 \end{pmatrix},
    \]
    and let $\Hyp_{\theta}(x_1,x_2,x_3,x_4,x_5,x_6; \chi_1, \chi_2, \chi_3)$ denote
    the GKZ hypergeometric sum associated to $A$ with parameters $\chi_1,\chi_2,\chi_3$.
    Then the exponential sum $L$ coincides with the value $\Hyp_{\theta}(1,1,1,-1,-1,-1;\epsilon,\epsilon,\epsilon)$.
    In other words, let $\Hyp_{\theta}(\chi_1,\chi_2,\chi_3)$ denote the $l$-adic GKZ hypergeometric sheaf
    associated to the matrix $A$ with parameters $\chi_1,\chi_2,\chi_3\in\widehat{\bF_q^{\times}}$;
    this is a mixed perverse sheaf on $\bA^6$ of weights $\leq 9$ \cite[Theorem 0.3 (i)]{Fu:GKZ}.
    Then, setting $x=(1,1,1,-1,-1,-1)\in\bA^6$ and taking arbitrary geometric point $\overline{x}$ lying above $x$, we have
    \[
        \Tr\big(\Frob_x; \Hyp_{\theta}(\epsilon,\epsilon,\epsilon)_{\overline{x}}\big) = (-1)^9L = -2q^2+3q+1.
    \]
\end{remark}

\section{The Image of Segre Embedding $\bP^1\times\bP^2\hookrightarrow\bP^5$.}

In this section, we give another example of counting rational points,
the image $V$ of the Segre embedding $\bP^1\times\bP^2 \hookrightarrow\bP^5$,
and show that the character sum $L$ defined in Theorem $\ref{thm:result1}$ again appears in the calculation.
The variety $V$ is defined in $\bP^5=\Proj\big(\bF_q[x_1,x_2,\dots,x_6]\big)$
by the following three polynomials:
\[
	f_1(x)= x_1x_5-x_2x_4,\quad f_2(x)= x_1x_6-x_3x_4,\quad f_3(x)= x_2x_6-x_3x_5.
\]
We again work under the notations in Remark \ref{rem:specific}.
This time, the symmetry shows that $N_1^{\times}=N_2^{\times}=N_3^{\times}$ and that
$N_{12}^{\times}=N_{23}^{\times}=N_{13}^{\times}$; we therefore have
\begin{equation}
    \# V(\bF_q) = N_0 + 3N_1^{\times} - 3N_{12}^{\times} + N_{123}^{\times}.
    \label{eq:summarize2}
\end{equation}

Firstly, we compute $N_0$.
Although we may again describe all points in $V_0$
(or may, at the beginning, proceed not along Remark \ref{rem:specific} but along the original Observation \ref{key}),
it here seems easier to identify $V$ as $\bP^1\times\bP^2$.
For a subscheme $W$ of $\bP^n$, let $W_0$ denote the set of rational points of $W$
at least one of whose components is zero;
therefore $N_0=\# V_0$ under this notation.
The identification $V=\bP^1\times\bP^2$ gives
\[
    V_0=\left\{(\bP^1)_0\times\bP^2(\bF_q)\right\} \cup \left\{\bP^1(\bF_q)\times(\bP^2)_0\right\},
\]
and it is easy to see that $\#(\bP^1)_0 = 2$ and $\#(\bP^2)_0 = 3q$.
Now, we see that $N_0=2(q^2+q+1)+(q+1)3q-2\cdot 3q=5q^2-q+2$.

Secondly, the number $(q-1)N_1^{\times} = \#\big\{ x\in(\bF_q^{\times})^6 \,\big|\, f_1(x)=0\big\}$ 
can be calculated just as in the previous section, and the result is $N_1^{\times}=(q-1)^4$.

Thirdly, we compute the number $(q-1)N_{12}^{\times}=\#\big\{ x\in(\bF_q^{\times})^6 \,\big|\, f_1f_2(x) =0\big\}$.
Since
\[
	f_1(x)f_2(x) = x_1^2x_5x_6 - x_1x_3x_4x_5 - x_1x_2x_4x_6 + x_2x_3x_4^2, 
\]
Proposition \ref{prop:hypersurfaceofgm} shows that
\[
    (q-1)N_{12}^{\times} = \frac{(q-1)^6}{q}+\frac{(q-1)^3}{q}\sum_{ {}^{\rt}(\chi_1,\dots,\chi_4)\in\Ker\varphi(\widetilde{R_{12}})}\prod_{j=1}^4 G\big(\chi_j^{-1}\big)\cdot \chi_2\chi_3(-1),
\]
where
\[
    \widetilde{R_{12}} = \begin{pmatrix} 2 & 1 & 1 & 0 \\ 0 & 0 & 1 & 1 \\ 0 & 1 & 0 & 1 \\ 0 & 1 & 1 & 2 \\
        1 & 1 & 0 & 0 \\ 1 & 0 & 1 & 0 \\ 1 & 1 & 1 & 1\end{pmatrix}.
\]
Since $\Ker\varphi(\widetilde{R_{12}})=\Set{ {}^{\rt}(\chi, \chi^{-1}, \chi^{-1}, \chi) | \chi\in\widehat{\bF_q^{\times}}}$, we may calculate $N_{12}^{\times}$ as in (\ref{eq:121}) and we get
\[
    N_{12}^{\times} = \frac{(q-1)^5}{q}+\frac{(q-1)^2}{q}\big\{q^2(q-2)+1\big\}=(q-1)^2(2q^2-5q+3).
\]

Finally, we compute the number $(q-1)N_{123}^{\times}=\#\big\{x\in(\bF_q^{\times})^6 \, \big|\, f_1(x)f_2(x)f_3(x)$ $=0 \big\}$.
Since $f_1(x)f_2(x)f_3(x)$ equals
\[
    x_1^2x_2x_5x_6^2 - x_1^2x_3x_5^2x_6 + x_1x_3^2x_4x_5^2 - x_1x_2^2x_4x_6^2 + x_2^2x_3x_4^2x_6 - x_2x_3^2x_4^2x_5,
\]
Proposition \ref{prop:hypersurfaceofgm} shows that
\[
    (q-1) N_{123}^{\times} = \frac{(q-1)^6}{q} + \frac{q-1}{q}\sum_{{}^{\rt}(\chi_1,\dots,\chi_6)\in\Ker\big(\varphi(\widetilde{R_{123}})\big)}
    \prod_{j=1}^6 G\big(\chi_j^{-1}\big)(\chi_2\chi_4\chi_6)(-1)
\]
with
\[
    \widetilde{R_{123}} = \begin{pmatrix}2&2&1&1&0&0\\1&0&0&2&2&1\\0&1&2&0&1&2\\0&0&1&1&2&2\\1&2&2&0&0&1\\2&1&0&2&1&0\\1&1&1&1&1&1\end{pmatrix}.
\]
We may again show that $\Ker\varphi(\widetilde{R_{123}})$ equals
\[
    \Set{ {}^{\rt}(\chi_4^{-2}\chi_5^{-2}\chi_6^{-1}, \chi_4^2\chi_5^3\chi_6^2, \chi_4^{-1}\chi_5^{-2}\chi_6^{-2}, \chi_4, \chi_5, \chi_6) | \chi_4,\chi_5,\chi_6\in\widehat{\bF_q^{\times}}};
\]
this space is the same as what we treated in the previous subsection.
Therefore, we may proceed the calculation just as before and show that, under the definition of $L$ in Theorem \ref{thm:result1},
\[
    N_{123}^{\times} = \frac{(q-1)^5}{q}+\frac{(q-1)^3}{q}L.
\]

Let us summarize the calculations above;
substituting the results in (\ref{eq:summarize2}), we see that
\[
    \# V(\bF_q) = -2q^4+10q^3-12q^2+10q+1+\{(q-1)^3L-1\}/q.
\]
This equation and Corollary \ref{cor:valueofcharsum} give
\[
    \# V(\bF_q) = q^3+2q^2+2q+1 = (q+1)(q^2+q+1),
\]
which coincides with $\# (\bP^1\times\bP^2)(\bF_q)$.

\section*{Acknowledgements.}

This article is based on the master thesis of the second author.
He would like to express his appreciation to his advisor Shun-ichi Kimura for his close and consistent
guidance,
and to Takashi Ono who wrote the textbook \cite{Ono} through which he learned algebraic number theory.

The first author also would like to express his gratitude to Shun-ichi Kimura for his fruitful comments 
on this article and for his encouraging authors to publish this result.

\bibliographystyle{dagaz}
\bibliography{math}

\end{document}